\documentclass[11pt]{article}

\usepackage{amsmath, amssymb, amsthm}
\usepackage{mathrsfs}
\usepackage{hyperref}
\usepackage{geometry}
\hypersetup{
    colorlinks=true,
    linkcolor=blue,
    citecolor=blue,
    urlcolor=blue
}

\title{\textbf{Spectral Geometry of Fourier Curves with Prime Frequencies}\\
\large A Comparative Experimental Study}

\author{
Dimitris Vartziotis$^{1,2,*}$
}

\date{}

\theoremstyle{plain}
\newtheorem{theorem}{Theorem}[section]
\newtheorem{lemma}[theorem]{Lemma}
\newtheorem{proposition}[theorem]{Proposition}

\newcommand{\N}{\mathbb{N}}
\newcommand{\R}{\mathbb{R}}
\newcommand{\C}{\mathbb{C}}
\DeclareMathOperator{\Real}{Re}
\DeclareMathOperator{\Imag}{Im}

\begin{document}

\maketitle

\vspace{-1em}

\begin{center}
\footnotesize
$^{1}$ NIKI -- Digital Engineering, Ioannina, Greece\\
$^{2}$ TWT Science \& Innovation, Stuttgart, Germany\\
$^{*}$ Corresponding author: \texttt{dimitris.vartziotis@nikitec.gr}
\end{center}

\vspace{1em}

\begin{abstract}
We present a comparative experimental study of planar curves arising from a Fourier series whose frequencies are the prime numbers, together with several randomized control models.
Starting from the series
\[
F_n(t)=\sum_{p\le n} v_p(n!)\, e^{i p t},
\qquad t\in[-\pi,\pi],
\]
introduced and motivated in a companion work, we investigate the geometric complexity of the associated planar curves obtained by sampling in the complex plane.
To test whether the observed multiscale behavior reflects arithmetic structure or can be reproduced as a generic consequence of sparsity or density, we compare the prime frequency model with randomized alternatives, including random frequency sets, a Cram\'er type random model, and a shuffled coefficient model.
Using consistent box counting protocols and Monte Carlo ensembles, we observe stable scale dependent behavior for the prime frequency curves that is not reproduced by the randomized models.
All results are experimental and are presented as evidence motivating further theoretical investigation.
\end{abstract}

\vspace{1em}

\noindent\textbf{Keywords:} prime numbers, Fourier series, trigonometric polynomials, arithmetic functions, box counting, geometric scaling

\noindent\textbf{MSC (2020):} Primary 42A16; Secondary 11A25, 28A80

\section{Introduction}

Geometric and spectral representations of arithmetic objects provide complementary perspectives to classical analytic number theory.
Fourier based constructions have been used to visualize and analyze structural features of primes and related arithmetic sequences (see, for example, Hardy--Littlewood~\cite{HardyLittlewood1923} and Montgomery~\cite{Montgomery1994}).
In a companion paper~\cite{Vartziotis2026a}, we introduced a Fourier series with prime frequencies and coefficients derived from factorial valuations, and we established its basic arithmetic motivation.

Sparse trigonometric series with arithmetic frequency sets have long been studied in analytic number theory and harmonic analysis; see for example \cite{Kahane1985,Strichartz1990}. Connections between additive arithmetic functions and prime factor statistics are discussed in \cite{VartziotisTzavellas2016}, while related geometric operator constructions appear in \cite{VartziotisWipper2010}.

Numerical sampling of this series produces planar curves with visually rich multiscale structure.
The present work addresses a central question of experimental mathematics: does the observed geometric complexity reflect arithmetic structure, or can it be reproduced by randomized models with comparable sparsity or density?
To answer this, we conduct a systematic comparison between the prime frequency construction and several carefully designed randomized control models.
Our aim is not to establish rigorous fractal dimensions, but to study effective geometric scaling and its stability under controlled perturbations of the underlying arithmetic data.

\section{Fourier model with prime frequencies}

\subsection{Definition}

For each $n \in \N$, we consider the finite Fourier series
\begin{equation}\label{eq:prime-induced-series}
F_n(t)=\sum_{p\le n} v_p(n!)\, e^{i p t},
\qquad t\in[-\pi,\pi].
\end{equation}
Here $v_p(n!)$ denotes the $p$-adic valuation of $n!$, given by Legendre’s formula
\begin{equation}\label{eq:legendre}
v_p(n!)=\sum_{k\ge 1}\left\lfloor \frac{n}{p^k}\right\rfloor.
\end{equation}

Sampling $F_n(t)$ at equidistant points $t_j$ yields a planar point set
\begin{equation}\label{eq:curve-points}
\Gamma_n=\left\{(\Real F_n(t_j),\, \Imag F_n(t_j))\right\}_{j=1}^{N}
\subset \R^2,
\end{equation}
which we interpret as a polygonal approximation of a planar curve.
A convenient choice is
\begin{equation}\label{eq:sampling-grid}
t_j=-\pi + 2\pi\,\frac{j-1}{N-1},
\qquad j=1,\dots,N.
\end{equation}

The following elementary estimates will be useful for orientation and provide a minimal analytic complement to the experimental part.

\begin{lemma}\label{lem:vp-bound}
For every prime $p\le n$,
\begin{equation}\label{eq:vp-bound}
v_p(n!) \le \frac{n}{p-1}.
\end{equation}
\end{lemma}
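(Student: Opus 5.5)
The plan is to bound Legendre's formula \eqref{eq:legendre} term by term. Dropping the floor functions only increases each summand, so
\[
v_p(n!)=\sum_{k\ge 1}\left\lfloor \frac{n}{p^k}\right\rfloor \le \sum_{k\ge 1}\frac{n}{p^k}.
\]
The sum on the right is a convergent geometric series, since $p\ge 2$ gives ratio $1/p<1$. Summing it,
\[
\sum_{k\ge 1}\frac{n}{p^k}=\frac{n/p}{1-1/p}=\frac{n}{p-1},
\]
which is exactly \eqref{eq:vp-bound}.

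The key steps, in order, are:
\begin{enumerate}
\item Note that the sum in \eqref{eq:legendre} is effectively finite: the terms vanish once $p^k>n$. This makes the comparison with the infinite sum harmless.
\item Apply $\lfloor x\rfloor\le x$ to each term, which is valid since all terms are nonnegative.
\item Evaluate the geometric series.
\end{enumerate}

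There is no real obstacle here. The only point needing care is that replacing a finite sum by the full infinite series of positive terms can only increase it. If desired, one could remark that the inequality is strict for $n\ge 1$, because the infinite tail $\sum_{p^k>n} n/p^k$ is positive. The hypothesis $p\le n$ is not needed for the bound itself; it only ensures that $p$ actually appears in \eqref{eq:prime-induced-series}.
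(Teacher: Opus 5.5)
Your proposal is correct and matches the paper's proof: drop the floors in Legendre's formula term by term and sum the geometric series to get $n/(p-1)$. Your side remarks also hold: the inequality is strict for $n\ge 1$, and the hypothesis $p\le n$ is not needed.
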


\begin{proof}
From \eqref{eq:legendre} we have $\lfloor n/p^k\rfloor \le n/p^k$ for each $k\ge 1$, hence
\[
v_p(n!)=\sum_{k\ge 1}\left\lfloor \frac{n}{p^k}\right\rfloor
\le \sum_{k\ge 1}\frac{n}{p^k}
= n\sum_{k\ge 1}\frac{1}{p^k}
= \frac{n}{p-1}.
\]
\end{proof}

\begin{proposition}\label{prop:l2}
For each $n\in\N$, the function $F_n$ is a trigonometric polynomial and belongs to $L^2([-\pi,\pi])$. Moreover,
\begin{equation}\label{eq:l2-norm}
\|F_n\|_{L^2([-\pi,\pi])}^2
=
2\pi\sum_{p\le n} v_p(n!)^2.
\end{equation}
\end{proposition}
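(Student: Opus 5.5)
The plan is to work directly from the definition \eqref{eq:prime-induced-series}. First I will note that the sum runs over the finitely many primes $p\le n$, and that each coefficient $v_p(n!)$ is a finite nonnegative integer. Finiteness follows from Legendre's formula \eqref{eq:legendre}, since $\lfloor n/p^k\rfloor=0$ once $p^k>n$; Lemma~\ref{lem:vp-bound} gives it as well. So $F_n$ is a finite linear combination of the exponentials $e^{ipt}$ with integer frequencies, which is exactly a trigonometric polynomial. For $n=1$ the sum is empty, so $F_1\equiv 0$ and both sides of \eqref{eq:l2-norm} vanish; the argument below covers this case trivially.

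Membership in $L^2([-\pi,\pi])$ then follows at once. $F_n$ is continuous on a compact interval, hence bounded, for instance by $|F_n(t)|\le\sum_{p\le n}v_p(n!)$. A bounded measurable function on a set of finite measure is square-integrable.

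For the norm identity I will expand $|F_n(t)|^2=F_n(t)\overline{F_n(t)}$ as the double sum
\[
\sum_{p\le n}\sum_{q\le n} v_p(n!)\,v_q(n!)\,e^{i(p-q)t},
\]
using that the coefficients are real. Integrating term by term over $[-\pi,\pi]$ is legitimate because the sum is finite. I will then use the orthogonality relation: $\int_{-\pi}^{\pi}e^{imt}\,dt$ equals $2\pi$ if $m=0$ and $0$ for any nonzero integer $m$. Here $m=p-q$, and distinct primes give distinct frequencies, so only the diagonal terms $p=q$ survive. This yields $2\pi\sum_{p\le n}v_p(n!)^2$. Equivalently, the identity is Parseval's theorem for the orthogonal system $\{e^{ikt}\}$, whose elements each have squared norm $2\pi$.

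There is no real obstacle. The only points needing care are the normalization (the unnormalized Lebesgue integral on $[-\pi,\pi]$ produces the factor $2\pi$) and checking that the frequencies are pairwise distinct integers, so that no cross terms survive.
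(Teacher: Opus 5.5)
Your proposal is correct and takes essentially the same approach as the paper: $F_n$ is a finite sum of exponentials, and orthogonality of $\{e^{ikt}\}$ together with the distinctness of the prime frequencies kills the cross terms in $\int_{-\pi}^{\pi}|F_n|^2\,dt$. You also fill in details the paper leaves implicit, namely finiteness of the coefficients, the empty case $n=1$, boundedness giving $L^2$ membership, and the explicit evaluation $\int_{-\pi}^{\pi}e^{imt}\,dt=2\pi$ for $m=0$ and $0$ otherwise.
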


\begin{proof}
Since \eqref{eq:prime-induced-series} is a finite sum of exponentials, $F_n$ is a trigonometric polynomial.
The functions $\{e^{ikt}\}_{k\in\mathbb{Z}}$ form an orthogonal set in $L^2([-\pi,\pi])$.
Because distinct primes correspond to distinct frequencies, all cross terms vanish when expanding $\int_{-\pi}^{\pi}|F_n(t)|^2\,dt$, which yields \eqref{eq:l2-norm}.
\end{proof}

\subsection{Normalization and sampling}

All curves are centered at their centroid and normalized to unit diameter.
Concretely, letting $z_j := F_n(t_j)\in\C$, define the centroid
\[
c := \frac{1}{N}\sum_{j=1}^N z_j,
\]
and the centered points $\tilde z_j := z_j-c$.
Let
\[
D := \max_{1\le j,k\le N} |\tilde z_j-\tilde z_k|
\]
be the empirical diameter. We then analyze the normalized sample
\begin{equation}\label{eq:normalization}
z_j^{\ast}:=\frac{\tilde z_j}{D},
\qquad j=1,\dots,N,
\end{equation}
equivalently $\Gamma_n$ after translating and scaling.
Sampling density $N$ is chosen sufficiently large to ensure numerical stability of box counting estimates, and no smoothing or filtering is applied.

\section{Random control models}

To isolate the role of arithmetic structure, we introduce three control models.
In all cases, the multiset of coefficients
\begin{equation}\label{eq:coeff-multiset}
W_n=\{v_p(n!) : p\le n\}
\end{equation}
is preserved up to relabeling.

Write the primes up to $n$ as $p_1<p_2<\cdots<p_{\pi(n)}$, where $\pi(n)$ denotes the prime counting function.

In the random frequency model, the prime frequencies are replaced by a uniformly chosen subset $\{m_k\}_{k=1}^{\pi(n)}\subset\{1,\dots,n\}$ selected without replacement. The associated Fourier series is
\begin{equation}\label{eq:random-frequency}
F_n^{(R)}(t)=\sum_{k=1}^{\pi(n)} v_{p_k}(n!)\, e^{i m_k t}.
\end{equation}
This construction preserves sparsity and coefficient magnitudes while destroying the arithmetic structure of the frequency set.

In the Cram\'er type random model, we generate a random subset of $\{2,\dots,n\}$ via independent Bernoulli trials with success probability $1/\log k$ for $k\ge 3$, in the spirit of Cram\'er's probabilistic model of the primes. Let $X_k\in\{0,1\}$ be independent with $\mathbb{P}(X_k=1)=1/\log k$. From the selected integers we form a frequency list $(q_1,\dots,q_{\pi(n)})$ by taking the first $\pi(n)$ selected values in increasing order, and we resample if fewer than $\pi(n)$ integers are selected. The corresponding Fourier series is
\begin{equation}\label{eq:cramer-model}
F_n^{(C)}(t)=\sum_{k=1}^{\pi(n)} v_{p_k}(n!)\, e^{i q_k t}.
\end{equation}
This model preserves the average density of primes while removing higher order correlations in the frequency set.

In the shuffled prime model, the prime frequencies $p_k$ are retained, but the coefficients $v_p(n!)$ are randomly permuted among them. Equivalently, letting $\sigma$ be a uniform random permutation of $\{1,\dots,\pi(n)\}$, we define
\begin{equation}\label{eq:shuffled-model}
F_n^{(S)}(t)=\sum_{k=1}^{\pi(n)} v_{p_{\sigma(k)}}(n!)\, e^{i p_k t}.
\end{equation}
This preserves sparsity and the distribution of coefficients while destroying the structured coupling between primes and factorial valuations.

For each randomized model we generate an ensemble of independent realizations and compute box counting statistics for each realization.
Unless stated otherwise, the baseline computations use $R=200$ realizations and a sampling density $N\ge 2^{13}$, which we found sufficient for stable qualitative comparisons (see the appendix).

\section{Box counting methodology}

Geometric complexity is quantified using standard box counting techniques; see Falconer~\cite{Falconer2014} and Tricot~\cite{Tricot1995}.
For a scale $\varepsilon>0$, the plane is covered by axis aligned squares of side length $\varepsilon$, and the number $N(\varepsilon)$ intersecting the normalized curve sample is recorded.

We define the effective scaling exponent
\begin{equation}\label{eq:effective-exponent}
\widehat d(\varepsilon)=\frac{\log N(\varepsilon)}{\log(1/\varepsilon)}.
\end{equation}
No claim is made that $\widehat d(\varepsilon)$ converges to a Hausdorff or Minkowski dimension; rather, \eqref{eq:effective-exponent} is used as a descriptive statistic for scale dependent geometric complexity.

In computations we evaluate $N(\varepsilon)$ at a dyadic sequence of scales $\varepsilon_m = 2^{-m}$, for integers $m\ge 1$. When summarizing behavior over a range of scales, we also consider least squares fits of $\log N(\varepsilon_m)$ as a function of $\log(1/\varepsilon_m)$ over selected intervals of $m$. The robustness of the qualitative conclusions with respect to the choice of fitting interval is discussed in the appendix.

\section{Experimental results}

For the curves defined by \eqref{eq:prime-induced-series}, we observe extended intermediate scaling ranges, effective exponents consistently bounded away from $1$ and $2$, and moderate oscillatory variation as $n$ increases. These features persist across sampling densities and normalization procedures.

In contrast, the random frequency model \eqref{eq:random-frequency} exhibits rapid saturation of box counts at small scales, effective exponents approaching $2$, and an absence of stable intermediate scaling regimes.

The Cram\'er type model \eqref{eq:cramer-model} produces partial multiscale structure, but with reduced variability of $\widehat d(\varepsilon)$ and weaker oscillations compared to the prime frequency curves.

The shuffled prime model \eqref{eq:shuffled-model} displays unstable scaling behavior, strong dependence on individual realizations, and loss of persistent effective exponents across scales.

\section{Discussion and conclusion}

The experiments indicate that the observed geometric complexity cannot be attributed solely to sparsity or frequency density. Rather, it appears to arise from the structured coupling between the prime frequency set and the factorial valuation weights. The multiscale behavior therefore seems to be a property of the Fourier representation built from these arithmetic data, rather than a generic feature of sparse trigonometric series.

Several natural questions arise from this study. One may ask whether effective scaling exponents such as $\widehat d(\varepsilon)$ can be bounded rigorously or related to analytic properties of the coefficients. It is also of interest to determine which arithmetic correlations contribute most strongly to the observed geometric complexity. More generally, one may investigate whether other additive arithmetic functions yield Fourier constructions with comparable multiscale behavior.

We have shown experimentally that the Fourier curves associated with \eqref{eq:prime-induced-series} exhibit stable scale dependent geometric features that are not reproduced by randomized control models preserving sparsity or average density. These findings motivate further theoretical investigation.

\section*{Acknowledgments}
The author thanks NIKI Digital Engineering and TWT GmbH Science \& Innovation for support. He also thanks S. Katsioli, and V. Maroulis for helpful discussions.

\newpage
\appendix
\section*{Appendix}
\section{Robustness and sensitivity analysis}

We performed several numerical checks to assess the stability of the reported scaling behavior.

Effective exponents were computed over multiple logarithmic fitting ranges, including ranges with $m$ from $2$ to $8$, from $3$ to $7$, and from $4$ to $8$. Across these choices, the relative ordering of models remained invariant, and variations in $\widehat d$ were below $0.03$.

Different normalization procedures were compared, including centroid and diameter normalization, maximum radius normalization, and bounding box normalization. The resulting deviations satisfied
\[
|\Delta \widehat d| < 0.02,
\]
without affecting qualitative conclusions.

Increasing the Monte Carlo ensemble size from $R=200$ to $R=500$ reduced variance while leaving mean values and relative ordering unchanged.

Finally, sampling densities $N \ge 2^{13}$ yielded stable box counting estimates, with no systematic drift of effective exponents at finer resolutions.

These checks indicate that the observed qualitative behavior is robust with respect to fitting range, normalization choice, ensemble size, and sampling density.

\end{document}